\newtheorem{theorem}{Theorem}[section]
\newtheorem{lemma}{Lemma}[section]
\theoremstyle{definition}
\newtheorem{definition}{Definition}[section]
\theoremstyle{remark}
\newtheorem{remark}{Remark}[section]
\theoremstyle{corollary}
\newtheorem{corollary}{Corollary}[section]
\begin{document}

\title{The equivariant  simplicial de Rham complex and the classifying space of a semi-direct product group}
\author{Naoya Suzuki}
\date{}
\maketitle

\begin{abstract}
We show that the cohomology group of the total complex of the equivariant simplicial de Rham complex is isomorphic to the cohomology group of the classifying space of a semi-direct product group. 
\end{abstract}

\section{Introduction}
In \cite{Wein}, Weinstein introduced the equivariant version of the simplicial de Rham complex. That is a double complex 
whose components are equivariant differential forms which is called the Cartan model(\cite{Ber}). Weinstein expected that the cohomology group of its
total complex is isomorphic to $H^*(B(G \rtimes H))$. Here $B(G \rtimes H)$ is the classifying space of a semi-direct product group.
In this paper, we show this conjecture is true.
\\

\section{Review of the simplicial de Rham complex}
In this section we recall the relation between the simplicial manifold $NG$ and the classifying space $BG$. We also recall
the notion of the equivariant version of the simplicial de Rham complex.

\subsection{The double complex on simplicial manifold}

For any Lie group $G$, we have simplicial manifolds $NG$, $N \bar{G}$ and simplicial $G$-bundle  $\gamma : N \bar{G} \rightarrow NG$
as follows:\\
\par
$NG(q)  = \overbrace{G \times \cdots \times G }^{q-times}  \ni (g_1 , \cdots , g_q ) :$  \\
face operators \enspace ${\varepsilon}_{i} : NG(q) \rightarrow NG(q-1)  $
$$
{\varepsilon}_{i}(g_1 , \cdots , g_q )=\begin{cases}
(g_2 , \cdots , g_q )  &  i=0 \\
(g_1 , \cdots ,g_i g_{i+1} , \cdots , g_q )  &  i=1 , \cdots , q-1 \\
(g_1 , \cdots , g_{q-1} )  &  i=q
\end{cases}
$$

\par
\medskip
$N \bar{G} (q) = \overbrace{ G \times \cdots \times G }^{q+1 - times} \ni (\bar{g}_1 , \cdots , \bar{g}_{q+1} ) :$ \\
face operators \enspace $ \bar{\varepsilon}_{i} : N \bar{G}(q) \rightarrow N \bar{G}(q-1)  $ 
$$ \bar{{\varepsilon}} _{i} (\bar{g}_1 , \cdots , \bar{g}_{q+1} ) = (\bar{g}_1 , \cdots , \bar{g}_{i} , \bar{g}_{i+2}, \cdots , \bar{g}_{q+1})  \qquad i=0,1, \cdots ,q $$

\par
\medskip

We define $\gamma : N \bar{G} \rightarrow NG $ as $ \gamma (\bar{g}_1 , \cdots , \bar{g}_{q+1} ) = (\bar{g}_1 {\bar{g}_2}^{-1} , \cdots , \bar{g}_{q} {\bar{g}_{q+1}}^{-1} )$.\\

(The standard definition also involves degeneracy operators but we do not need them here).\\

\begin{remark}
Here we use the notation $\bar{g}_i$ to distinguish elements in $NG$ from elements in $N \bar{G}$. It does not mean the
complex conjugate.\\
\end{remark}

For any simplicial manifold $\{ X_* \}$, we can associate a topological space $\parallel X_* \parallel $ 
called the fat realization defined as follows. 
$$  \parallel X_* \parallel \enspace \buildrel \mathrm{def} \over = \coprod _{n}  {\Delta}^{n} \times X_n / \enspace ( {\varepsilon}^{i} t , x) \sim (  t , {\varepsilon}_{i} x).$$
Here ${\Delta}^{n}$ is the standard $n$-simplex and ${\varepsilon}^{i}$ is a face map of it.
It is well-known that 
$\parallel \gamma \parallel : \parallel N \bar{G} \parallel \rightarrow \parallel NG \parallel$ is the universal bundle $EG \rightarrow BG$  (see \cite{Dup2} 
\cite{Mos} \cite{Seg}, for instance). \\

Now we introduce a double complex associated to a simplicial manifold.

\begin{definition}
For any simplicial manifold $ \{ X_* \}$ with face operators $\{ {\varepsilon}_* \}$, we have a double complex ${\Omega}^{p,q} (X) \buildrel \mathrm{def} \over = {\Omega}^{q} (X_p) $ with derivatives as follows:
$$ d' = \sum _{i=0} ^{p+1} (-1)^{i} {\varepsilon}_{i} ^{*}  , \qquad  d'' := (-1)^{p} \times {\rm the \enspace exterior \enspace differential \enspace on \enspace }{ \Omega ^*(X_p) } .$$

\end{definition}
$\hspace{30em} \Box $
\newpage
For $NG$ and $N \bar{G} $ the following holds.

\begin{theorem}[\cite{Bot2} \cite{Dup2} \cite{Mos}]
 There exist ring isomorphisms 

$$ H^*({\Omega}^{*} (NG))  \cong  H^{*} (BG ), \qquad  H^*({\Omega}^{*} (N \bar{G})) \cong H^{*} (EG ).  $$

 Here ${\Omega}^{*} (NG)$  and  ${\Omega}^{*} (N \bar{G})$  mean the total complexes.
\end{theorem}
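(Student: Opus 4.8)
The plan is to prove a statement for an arbitrary simplicial manifold $X_\bullet$ — namely that the total complex $\Omega^*(X_\bullet)$ of Definition~2.1 computes $H^*(\|X_\bullet\|;\mathbb{R})$ as a graded ring — and then to specialize to $X_\bullet = NG$ and $X_\bullet = N\bar G$, where the fat realizations are the quoted $BG$ and $EG$.

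First I would introduce, following Dupont \cite{Dup2}, the simplicial de Rham complex $A^*(X_\bullet)$ whose degree-$n$ part consists of compatible families $\{\phi_p\}$, $\phi_p \in \Omega^n(\Delta^p \times X_p)$, with $(\varepsilon^i\times\mathrm{id})^*\phi_p = (\mathrm{id}\times\varepsilon_i)^*\phi_{p-1}$; this is a commutative DGA and there is a natural quasi-isomorphism $A^*(X_\bullet)\to C^*(\|X_\bullet\|;\mathbb{R})$ — a de Rham theorem for the fat realization — so that $H^*(A^*(X_\bullet))\cong H^*(\|X_\bullet\|;\mathbb{R})$ as rings. Then I would write down the integration-over-the-simplex map $\mathcal I\colon A^*(X_\bullet)\to \Omega^*(X_\bullet)$, $\phi\mapsto \{\int_{\Delta^p}\phi_p\}$, and check it is a morphism of complexes for the differentials of Definition~2.1 (this is the Stokes-formula computation that produces the $d'=\sum(-1)^i\varepsilon_i^*$ term from the boundary of $\Delta^p$, together with the sign $(-1)^p$ in $d''$).

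The comparison is then run through two spectral sequences, both obtained by filtering by the simplicial degree $p$. For $\Omega^*(X_\bullet)$ this is the double-complex spectral sequence with $E_1^{p,q}=H^q_{dR}(X_p)$; for $A^*(X_\bullet)$ the analogous filtration has $E_1^{p,q}=H^q_{dR}(X_p)$ as well, since $\int_{\Delta^p}\colon \Omega^*(\Delta^p\times X_p)\to\Omega^*(X_p)$ is a homotopy equivalence by the Poincaré lemma for the contractible fibre $\Delta^p$. The map $\mathcal I$ respects the filtrations and induces the identity on these $E_1$-pages; both spectral sequences are first-quadrant, hence convergent, so the comparison theorem for spectral sequences shows $\mathcal I$ is a quasi-isomorphism. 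Chaining $\Omega^*(X_\bullet)\xleftarrow{\ \mathcal I\ }A^*(X_\bullet)\to C^*(\|X_\bullet\|;\mathbb{R})$ yields the additive isomorphism $H^*(\Omega^*(X_\bullet))\cong H^*(\|X_\bullet\|;\mathbb{R})$; specializing to $NG$ and $N\bar G$ and invoking $\|NG\|=BG$, $\|N\bar G\|=EG$ finishes the additive part of the theorem (for $N\bar G$ this in particular exhibits acyclicity, consistent with $EG$ being contractible).

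The hard part will be the ring structure. The product on $\Omega^*(X_\bullet)$ is not simply the wedge product slot by slot — it has to be defined (following Bott–Shulman–Stasheff and Dupont) by an explicit alternating-sum formula involving the face operators, precisely because $\int_{\Delta^p}$ is not an algebra map, $\int_{\Delta^p}(\alpha\wedge\beta)\neq \int_{\Delta^p}\alpha\wedge\int_{\Delta^p}\beta$. What one shows is that $\mathcal I$ is multiplicative up to an explicit chain homotopy built from Dupont's homotopy operator on $A^*$, so that the induced map on cohomology is a ring homomorphism; combined with the classical wedge-versus-cup comparison for $A^*(X_\bullet)\to C^*(\|X_\bullet\|;\mathbb{R})$, this makes the cohomology isomorphism multiplicative. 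Verifying that homotopy formula, and that the resulting product on $H^*(\Omega^*(X_\bullet))$ is well defined, associative and graded-commutative independently of the choices, is the main technical obstacle; everything else reduces to the Poincaré lemma, Stokes' theorem, and the standard comparison theorem.
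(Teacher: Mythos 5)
The paper states this theorem without proof, citing it from the literature, and your argument is essentially the proof given in the cited reference \cite{Dup2}: the simplicial de Rham complex $A^*(X_\bullet)$ of compatible forms on $\Delta^p\times X_p$, the de Rham map to $C^*(\|X_\bullet\|;\mathbb{R})$, the integration-over-the-simplex map, the filtration-by-$p$ comparison (which Dupont runs through his Lemma 1.19 --- quoted later in this paper as Lemma 3.1 --- rather than the spectral-sequence comparison theorem, but these are interchangeable here), and multiplicativity of integration up to chain homotopy. So your proposal is correct and follows the same route as the paper's sources; the only slight imprecision is that the product on the total complex $\Omega^*(X_\bullet)$ is the Bott--Shulman--Stasheff front-face/back-face pullback product $\alpha\cdot\beta=(-1)^{qp'}\mu^*\alpha\wedge\nu^*\beta$ rather than an alternating-sum formula, which does not affect the argument.
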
 
$\hspace{30em} \Box $

\subsection{Equivariant version}

When a Lie group $H$ acts on a manifold $M$, there is the complex of equivariant differential forms 
${\Omega}_H ^{*} (M) := ( {\Omega} ^{*} (M) \otimes S(\mathcal{H}^*))^H$ with suitable differential $d_H$ (\cite{Ber} \cite{Car}).
Here $\mathcal{H}$ is the
Lie algebra of $H$ and
$S(\mathcal{H}^*)$
is the algebra of polynomial functions on $\mathcal{H}$. This is
called the Cartan Model.
When $M$ is a Lie group $G$, we can define the double complex ${\Omega}^{*} _H (NG(*))$ in the same way as in Definition 2.1. 
This double complex is originally introduced by Weinstein in \cite{Wein}.

\section{The triple complex on bisimplicial manifold}
In this section we construct a triple complex on a bisimplicial manifold.\\
\par
A bisimplicial manifold is a sequence of manifolds with horizontal and vertical face and degeneracy operators which commute with each other.
A bisimplicial map is a sequence of maps commuting with horizontal and vertical face and degeneracy operators.
Let $H$ be a subgroup of $G$.
We define a bisimplicial manifold $NG(*) \rtimes NH(*)$ as follows;
\par
$$NG(p) \rtimes NH(q)  := \overbrace{G \times \cdots \times G }^{p-times} \times \overbrace{H \times \cdots \times H }^{q-times}. $$  
Horizontal face operators \enspace ${\varepsilon}_{i}^{G} : NG(p) \rtimes NH(q) \rightarrow NG(p-1)  \rtimes NH(q) $ are the same as the face operators of $NG(p)$.
Vertical face operators \enspace ${\varepsilon}_{i}^{H} : NG(p) \rtimes NH(q) \rightarrow NG(p)  \rtimes NH(q-1) $ are
$$
{\varepsilon}_{i}^{H}(\vec{g}, h_1 , \cdots , h_q )=\begin{cases}
(\vec{g}, h_2 , \cdots , h_q )  &  i=0 \\
(\vec{g}, h_1 , \cdots ,h_i h_{i+1} , \cdots , h_q )  &  i=1 , \cdots , q-1 \\
(h_{q}\vec{g}h_{q} ^{-1}, h_1 , \cdots , h_{q-1} )  &  i=q.
\end{cases}
$$
Here $\vec{g}=(g_1, \cdots , g_p)$.

We define a bisimplicial map $\gamma_{\rtimes} : N \bar{G}(p) \times N \bar{H}(q) \rightarrow NG(p) \rtimes NH(q) $ as $ \gamma_{\rtimes} (\vec{\bar{g}}, \bar{h}_1, \cdots ,\bar{h}_{q+1} ) = (\bar{h}_{q+1}\gamma (\vec{\bar{g}}) \bar{h}^{-1} _{q+1} ,
 \gamma (\bar{h}_1, \cdots, \bar{h}_{q+1}))$.
Now we fix a semi-direct product operator $\cdot_{\rtimes}$of $G \rtimes H$ as $(g, h) \cdot_{\rtimes} (g', h') := (ghg'h^{-1} , hh')$,
then
$G \rtimes H$ acts $ N \bar{G}(p) \times N \bar{H}(q)$ by right as $(\vec{\bar{g}},\vec{\bar{h}})\cdot(g,h) = (h^{-1}\vec{\bar{g}}gh, \vec{\bar{h}}h)$.
Since $\gamma_{\rtimes}(\vec{\bar{g}},\vec{\bar{h}})=
\gamma_{\rtimes}((\vec{\bar{g}},\vec{\bar{h}})\cdot(g,h))$, one can see that $\gamma_{\rtimes}$ is a principal  $(G \rtimes H)$-bundle.
$\parallel N \bar{G}(*) \times N \bar{H}(*) \parallel$ is $EG \times EH$ so its homotopy groups are trivial in any dimension 
and $\parallel NG(*) \rtimes NH(*) \parallel$ is homeomorphic to $(EG \times EH)/(G \rtimes H)$.
We can also check that $EG \times EH \rightarrow (EG \times EH)/(G \rtimes H)$ is a principal  $(G \rtimes H)$-bundle 
since $(G \rtimes H)$ is an absolute neighborhood retract (see for example \cite{Dup2} P.73).
Hence $\parallel NG(*) \rtimes NH(*) \parallel$ is a model of $B(G \rtimes H)$.

\begin{definition}
For a bisimplicial manifold $NG(*) \rtimes NH(*)$, we have a triple complex as follows:

$${\Omega}^{p,q,r} (NG(*) \rtimes NH(*)) \buildrel \mathrm{def} \over = {\Omega}^{r} (NG(p) \rtimes NH(q)) $$

Derivatives are:
$$ d' = \sum _{i=0} ^{p+1} (-1)^{i} ({{\varepsilon}^G _{i}}) ^{*}  , \qquad  d'' = \sum _{i=0} ^{q+1} (-1)^{i} ({{\varepsilon}^H _{i}}) ^{*} \times (-1)^{p} $$
$$ d''' =  (-1)^{p+q} \times {\rm the \enspace exterior \enspace differential \enspace on \enspace }{ \Omega ^*(NG(p) \rtimes NH(q)) }.$$

\end{definition}
$\hspace{30em} \Box $

Let $C^*(X)$ denote the set of singular cochains of a topological space $X$.
We can also define the triple complex ${C}^{p,q,r} (NG(*) \rtimes NH(*))$ in the same way.
Applying the de Rham theorem and the lemma below twice, we can see that the total complex ${\Omega}^{*} (NG \rtimes NH)$ of the triple complex in the Definition 3.1 is quasi-isomorphic to the total complex
 of ${C}^{p,q,r} (NG(*) \rtimes NH(*))$.
\begin{lemma}[\cite{Dup2}, lemma 1.19]
 Let $K_1 ^{p,q}$ and $K_2 ^{p,q}$ be 1.quadrant double complexes, i.e. $K_1 ^{p,q}= K_2 ^{p,q}=0$ if either $p <0$ or
$q <0$.
 Suppose $f:K_1 ^{*,*} \rightarrow K_2 ^{*,*}$  is a homomorphism of double complexes and suppose
$f^{p,q} : H^p(K_1^{*,q},d_1') \rightarrow  H^p(K_2^{*,q},d_2')$  is an isomorphism. Then also $f^* : H^*(K_1,d_1) \rightarrow  H^*(K_2,d_2)$
 is an isomorphism.$\hspace{24em} \Box $
\end{lemma}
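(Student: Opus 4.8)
The plan is to deduce the statement from the standard comparison theorem for the spectral sequences of first--quadrant double complexes. For a double complex $K^{*,*}$ write $\mathrm{Tot}(K)^n=\bigoplus_{p+q=n}K^{p,q}$ with total differential $d=d'+d''$, and filter $\mathrm{Tot}(K)$ by the \emph{vertical} degree, $F^q\mathrm{Tot}(K)^n=\bigoplus_{j\ge q}K^{n-j,j}$. Because $K$ is first quadrant this filtration is bounded in each total degree, so the associated spectral sequence converges to $H^*(\mathrm{Tot}(K))$; the point of filtering by the vertical index (rather than the horizontal one) is that then $d_0$ is the horizontal differential $d'$, so that, after the usual reindexing, $E_1^{q,p}\cong H^p(K^{*,q},d')$ with $d_1$ induced by $d''$.

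First I would note that $f$ respects the filtration and hence induces a morphism between the spectral sequences of $K_1$ and $K_2$; on the $E_1$--page this morphism is exactly the family of maps $f^{p,q}\colon H^p(K_1^{*,q},d_1')\to H^p(K_2^{*,q},d_2')$, which are isomorphisms by hypothesis. Then I would invoke the comparison theorem: if a morphism of spectral sequences is an isomorphism on $E_r$, then because $E_{r+1}=\ker d_r/\operatorname{im}d_r$ and $f$ commutes with $d_r$, the five lemma yields an isomorphism on $E_{r+1}$; by induction $f$ is an isomorphism on $E_\infty$. Finally, $E_\infty$ is the associated graded of the filtration induced on $H^n(\mathrm{Tot}(K))$, which is a bounded filtration since $K$ is first quadrant; so an induction on the filtration steps together with the five lemma applied to the exact sequences $0\to F^{q+1}H^n\to F^qH^n\to E_\infty^{q,n-q}\to 0$ promotes the isomorphism on $E_\infty$ to the desired isomorphism $f^*\colon H^*(\mathrm{Tot}(K_1),d_1)\to H^*(\mathrm{Tot}(K_2),d_2)$.

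I do not expect a real obstacle: this is the textbook comparison theorem, and the only points requiring attention are choosing the filtration by the \emph{vertical} index so that $E_1$ computes horizontal cohomology, and observing that the first--quadrant hypothesis is precisely what makes the relevant filtrations bounded (hence guarantees convergence and lets the final five--lemma induction terminate). If one wishes to bypass spectral sequences, an equivalent argument is to form the double--complex mapping cone $C^{p,q}$ of $f$: the long exact sequence in $d'$--cohomology of the rows shows $H^p(C^{*,q},d')=0$ for all $p,q$, and then a direct ``staircase'' diagram chase — which terminates exactly because $C$ is first quadrant — shows $\mathrm{Tot}(C)$ is acyclic, i.e.\ that $\mathrm{Tot}(f)$ is a quasi-isomorphism. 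That diagram chase is the only mildly tedious step, and it is completely routine.
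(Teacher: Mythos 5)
The paper does not prove this statement at all: it is quoted verbatim from Dupont (\cite{Dup2}, Lemma 1.19) and used as a black box, so there is no internal proof to compare your argument against. Your proposal is correct and is the standard argument for this comparison lemma. The details check out: filtering $\mathrm{Tot}(K)$ by the vertical index makes $d_0=d'$, so $E_1$ is exactly the row cohomology $H^p(K^{*,q},d')$ on which $f$ is assumed to be an isomorphism; the first-quadrant hypothesis makes the filtration finite in each total degree, so both spectral sequences converge, the induction through the pages gives an isomorphism on $E_\infty$, and the finite-length filtration of $H^n(\mathrm{Tot}(K))$ lets the five lemma promote this to an isomorphism on total cohomology. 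Your alternative route via the double-complex mapping cone (acyclic rows from the long exact sequence, then a staircase chase terminating by first-quadrant boundedness) is equally valid and is close in spirit to the elementary, spectral-sequence-free treatment one finds in Dupont's book. Either version would serve as a complete proof of the lemma; the only caveat is cosmetic, namely that in the cone argument you should fix sign conventions so that the cone is again a double complex and the long exact sequence in $d'$-cohomology exists, but that is routine.
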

\begin{remark}
Let $C_*(X)$ denote the set of singular chains of a topological space $X$. We can also define the triple complex ${C}_{p,q,r} (NG(*) \rtimes NH(*)):={C}_{r} (NG(p) \rtimes NH(q))$ of the singular chains in the 
same way.
\end{remark}
\section{Main theorem}
\begin{theorem}
 If $H$ is compact, there exist isomorphisms 

$$ H({\Omega}_H ^{*} (NG))  \cong H({\Omega}^{*} (NG \rtimes NH)) \cong  H^{*} (B(G \rtimes H)).$$

 Here ${\Omega}_H ^{*} (NG)$  means the total complex in subsection 2.2.
\end{theorem}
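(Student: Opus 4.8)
The plan is to establish the two isomorphisms separately, using the triple complex $\Omega^{*}(NG \rtimes NH)$ as the common bridge. For the second isomorphism, $H(\Omega^{*}(NG \rtimes NH)) \cong H^{*}(B(G \rtimes H))$, I would argue exactly as in the discussion following Definition 3.1: apply the de Rham theorem fibrewise and invoke Lemma 3.1 twice to identify the total complex of $\Omega^{p,q,r}(NG(*) \rtimes NH(*))$ with the total complex of the singular-cochain triple complex $C^{p,q,r}(NG(*) \rtimes NH(*))$. One then recognizes this as computing the cohomology of the fat realization $\parallel NG(*) \rtimes NH(*) \parallel$, which was shown in Section 3 to be a model of $B(G \rtimes H)$. (A standard spectral-sequence or bicomplex argument for the cohomology of a bisimplicial space applies here, together with the fact that the fat realization is built from the pieces $NG(p) \rtimes NH(q)$.)

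For the first isomorphism, $H(\Omega_H^{*}(NG)) \cong H(\Omega^{*}(NG \rtimes NH))$, the idea is to filter the triple complex $\Omega^{p,q,r}$ by the vertical simplicial degree $q$ (the $NH$-direction) and first take cohomology with respect to $d''$, the simplicial differential coming from the $NH$-face maps. Since $H$ is compact, $NH(*)$ is (fibrewise) a simplicial model for $EH$ with the diagonal $H$-action, and the key computation is that for each fixed $p$ and each fixed form-degree the $d''$-cohomology of $r \mapsto \Omega^{r}(NG(p) \times NH(q))$ collapses: the Bott–Shulman–Dupont type argument (a simplicial de Rham / Poincaré lemma in the $NH$-direction) shows that taking $d''$-cohomology of $\Omega^{*}(NG(p) \times NH(*))$ recovers, via compact averaging over $H$, the Cartan model $\Omega_H^{*}(NG(p)) = (\Omega^{*}(NG(p)) \otimes S(\mathcal{H}^{*}))^{H}$. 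Concretely one identifies $\Omega^{*}(NH(q))$-contributions with the polynomial generators of $S(\mathcal{H}^{*})$ in the Weil–Cartan picture, using the usual equivalence of the Weil model and the Cartan model for a compact group; the twist by $h_q$ in the top $H$-face operator ${\varepsilon}_q^{H}$ is precisely what produces the conjugation action of $H$ on $NG(p)$ that defines $\Omega_H^{*}$. Having computed the $d''$-cohomology, what survives is the double complex $(p,r) \mapsto \Omega_H^{r}(NG(p))$ with differentials $d'$ and $d'''$, which is by definition the total complex $\Omega_H^{*}(NG)$ of Subsection 2.2; a Lemma 3.1-style collapse (now for the remaining spectral sequence, or directly since $d''$-cohomology was already isolated) then gives the claimed isomorphism of total cohomologies.

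I expect the main obstacle to be the middle step: making rigorous the identification of the $d''$-cohomology of the $NH$-direction with the polynomial part $S(\mathcal{H}^{*})$ of the Cartan model, compatibly with all three differentials and with the $H$-action. This requires care because one must simultaneously (i) run the simplicial-to-Cartan comparison of Bott–Shulman–Stasheff/Dupont for $EH$, (ii) keep track of how the $G$-factors ride along as coefficients, and (iii) verify that the residual differential induced by $d''$ on the $E_1$-page is exactly the Cartan differential $d_H$ and not merely something isomorphic to it up to higher corrections. Compactness of $H$ enters decisively here, both to guarantee that averaging (the projector onto $H$-invariants) is exact — so that taking invariants commutes with cohomology — and to ensure $S(\mathcal{H}^{*})^{H}$ is the right model; without it the $d''$-cohomology would not reduce to a manageable finite-type object. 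Once this comparison is pinned down, the surrounding bookkeeping — sign conventions for $d', d'', d'''$, the 1-quadrant hypothesis for applying Lemma 3.1, and the identification of realizations — is routine.
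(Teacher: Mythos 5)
Your treatment of the second isomorphism, $H(\Omega^{*}(NG \rtimes NH)) \cong H^{*}(B(G \rtimes H))$, is essentially the paper's argument: de Rham theorem plus Lemma 3.1 (twice) to pass to singular cochains, then identification with the cohomology of $\parallel NG(*) \rtimes NH(*)\parallel$, which Section 3 shows is a model of $B(G \rtimes H)$. The paper is somewhat more careful at the last step --- it forms the fat realization of the triple simplicial set $S_r(NG(p)\rtimes NH(q))$ and applies Dupont's Lemma 5.16 twice to the evaluation maps $\rho_{p,q}$ --- but your ``standard bisimplicial bookkeeping'' is the same idea and could be made precise along those lines.

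The genuine gap is in the first isomorphism, and it sits exactly where you say the main obstacle is. The paper does not prove the comparison between the $NH$-direction and the Cartan model at all: it quotes Getzler's theorem (\cite{Get}), which asserts that for compact $H$ acting on a manifold $M$ there is an explicit cochain map from the total complex of $\Omega^{*}(M \rtimes NH(*))$ to $(\Omega_H^{*}(M), d_H)$ inducing an isomorphism in cohomology; applying this with $M = G^{p}$ (conjugation action, produced by the twist in $\varepsilon^H_q$) and then Lemma 3.1 in the $p$-direction finishes the step. Your plan to re-derive this by filtering on $q$ and claiming that the $d''$-cohomology of $\Omega^{*}(NG(p) \rtimes NH(*))$ ``collapses onto'' $(\Omega^{*}(NG(p)) \otimes S(\mathcal{H}^{*}))^{H}$ with induced differential exactly $d_H$ is not correct as stated: in the Bott--Shulman--Getzler picture the polynomial generators of $S(\mathcal{H}^{*})$ do not live in a single simplicial degree, and the quasi-isomorphism is an explicit degree-mixing map (built from integration over simplices / Chern--Weil-type data), not an $E_1$-degeneration of the naive filtration by $q$ at fixed form degree $r$. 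So the middle step you flag is not merely delicate bookkeeping; as proposed, the mechanism fails, and what is actually needed there is precisely the cited theorem of Getzler (or an equivalent construction of the comparison cochain map). With that substitution the rest of your outline matches the paper's proof.
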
 
\begin{proof}
At first we recall the Getzler's result in \cite{Get}. When a Lie group $H$ acts on a manifold $M$ by left, there is a simplicial manifold $ \{ M \rtimes NH(q) \}$ 
with face operators:
$$
{\varepsilon}_{i}(u, h_1 , \cdots , h_q )=\begin{cases}
(u, h_2 , \cdots , h_q )  &  i=0 \\
(u, h_1 , \cdots ,h_i h_{i+1} , \cdots , h_q )  &  i=1 , \cdots , q-1 \\
(h_{q} u, h_1 , \cdots , h_{q-1} )  &  i=q.
\end{cases}
$$

We need the following theorem for the proof.
\begin{theorem}[\cite{Get}]
 If $H$ is compact, there is a cochain map between the total complex of the double complex $\Omega ^*(M \rtimes NH(*))$  and $(\Omega_H ^* (M) , d_H)$ which induces an isomorphism in cohomology.
\end{theorem}
As a corollary of this theorem, we obtain the following statement.
\begin{corollary}
 For any fixed $p$,  the total complex of the double complex $\Omega ^*(NG(p) \rtimes NH(*))$  is quasi-isomorphic to $(\Omega_H ^* (G^p) , (-1)^pd_H)$
\end{corollary}
Hence using the Lemma 3.1, we can see that $H^*({\Omega}_H ^{*} (NG))$  is isomorphic to $ H^*({\Omega}^{*} (NG \rtimes NH))$.\\

Now we prove the existence of the another isomorphism.  Let $S_*(X)$ denote the set of singular simplexes of a topological space $X$.
For a triple simplicial set $S_r (NG(p) \rtimes NH(q)) $, we have the fat realization 
$$  \coprod _{r,p,q \geq 0 }   {\Delta}^{p} \times {\Delta}^{q}  \times {\Delta}^{r} \times S_r (NG(p) \rtimes NH(q) ) /  \sim .$$
with suitable identifications. This is a CW complex and the set of $n$-cells are in one-to-one correspondence with  $ \coprod _{r+p+q=n } S_r (NG(p) \rtimes NH(q) )$. 
Its homology group coincides with the homology group of the total complex of the triple complex ${C}_{p,q,r} (NG \rtimes NH)$.

So we need to show the cohomology group of this CW complex is isomorphic to $H^*(\parallel NG \rtimes NH \parallel)$.
We recall that the map $ \rho : {\Delta}^{r} \times  S_r (X) \rightarrow X$ which is defined as $ \rho(t, \sigma_r) := \sigma_r (t)$
induces an isomorphism $H_*(\coprod _{r} {\Delta}^{r} \times  S_r (X)/  \sim) \cong H_*(X)$ (see for instance \cite{Dup2} P.82). Hence for any fixed $p,q$, the following map $\rho _{p,q}$ which is same as $ \rho$ induces an isomorphism in homology.
$$ \rho _{p,q} : \coprod _{r}  {\Delta}^{r} \times  S_r (NG(p) \rtimes NH(q))  /  \sim \enspace \rightarrow NG(p) \rtimes NH(q).$$

We also use the following lemma.
\begin{lemma}[\cite{Dup2}, Lemma 5.16]
 Let $f:\{X_* \} \rightarrow \{ X'_* \} $  be a simplicial map of simplicial spaces such that  $f_p:X_p \rightarrow X_p' $  induces an isomorphism in 
homology with coefficients in a ring $\lambda $   for all $p$.
 Then $\parallel f \parallel:\parallel X_* \parallel \rightarrow \parallel X'_* \parallel$ also induces an isomorphism
in homology and cohomology with coefficients in $\lambda$.
\end{lemma}

By applying the Lemma 4.1, we see that for any fixed $p$, 
$  \parallel \rho _{p,*} \parallel : \coprod _{q}  {\Delta}^{q} \times (\coprod _{r}  {\Delta}^{r} \times  S_r (NG(p) \rtimes NH(q))  /  \sim)/  \sim 
\enspace \rightarrow \coprod _{q}  {\Delta}^{q} \times NG(p) \rtimes NH(q)  /  \sim $
induces an isomorphism in homology.\\

Hence again applying the Lemma 4.1 we can see that
$$  \parallel \rho _{*,*} \parallel :\coprod _{p}  {\Delta}^{p} \times ( \coprod _{q}  {\Delta}^{q} \times (\coprod _{r}  {\Delta}^{r} \times  S_r (NG(p) \rtimes NH(q))  /  \sim)/  \sim )/  \sim$$
$$ \enspace \rightarrow \coprod _{p}  {\Delta}^{p} \times ( \coprod _{q}  {\Delta}^{q} \times NG(p) \rtimes NH(q)  /  \sim )/  \sim .$$
 induces an isomorphism in cohomology. This completes the proof of Theorem 4.1.
\end{proof}
{\bf Acknowledgments.} \\
The author is indebted to his supervisor, Professor H. Moriyoshi for helpful discussion and good advice.
The author would like to thank the referee for his/her several suggestions to improve the present paper.

Graduate School of Mathematics, Nagoya University, Furo-cho, Chikusa-ku, Nagoya-shi, Aichi-ken, 464-8602, Japan. \\
e-mail: suzuki.naoya@c.mbox.nagoya-u.ac.jp
\end{document}